\documentclass[a4paper,11pt]{article}
\usepackage[utf8]{inputenc}
\usepackage[T1]{fontenc}

\usepackage[affil-it]{authblk}

\usepackage{indentfirst}
\usepackage{enumitem}
\usepackage{amsmath}
\usepackage{amssymb,amsmath,amsthm}
\usepackage{ebproof}
\usepackage{aliascnt}
\usepackage{hyperref}
\usepackage[capitalise]{cleveref}
\usepackage{ragged2e}
\usepackage{adjustbox}
\usepackage [autostyle, english = american]{csquotes}
\MakeOuterQuote{"}

\usepackage[backend=biber,style=alphabetic]{biblatex}
\usepackage[affil-it]{authblk}

\usepackage{tikz-cd}
\usetikzlibrary{decorations.markings,intersections}
\tikzset{shorten <>/.style={shorten >=#1,shorten <=#1}}
\tikzset{every picture/.prefix code=\DisableQuotes}

\tikzset{%
scalearrow/.style n args={3}{
  decoration={
    markings,
    mark=at position (1-#1)/2*\pgfdecoratedpathlength
      with {\coordinate (#2);},
    mark=at position (1+#1)/2*\pgfdecoratedpathlength
      with {\coordinate (#3);},
    },
  postaction=decorate,
  }
}

\usepackage{quiver}
\usepackage[mathcal]{euscript}
\usepackage{ stmaryrd }
\usepackage{ dsfont }
\usepackage{upgreek}
\usepackage{ mathrsfs }
\usepackage{comment}

\renewenvironment{abstract}
 {\small
  \begin{center}
  \bfseries \abstractname\vspace{-.5em}\vspace{0pt}
  \end{center}
  \list{}{%
    \setlength{\leftmargin}{2mm}
    \setlength{\rightmargin}{\leftmargin}%
  }%
  \item\relax}
 {\endlist}
 
 \let\oldtheorem\newtheorem
\RenewDocumentCommand{\newtheorem}{s m o m O{}}{%
\IfBooleanTF{#1}%
{\oldtheorem{#2}{#4}}%
{\IfNoValueTF{#3}{\oldtheorem{#2}{#4}[#5]}%
{\newaliascnt{#2}{#3}%
\oldtheorem{#2}[#2]{#4}%
\aliascntresetthe{#2}}}}

\newtheorem{theorem}{Theorem}[section]
\newtheorem{proposition}[theorem]{Proposition}
\newtheorem{lemma}[theorem]{Lemma}

\theoremstyle{definition}
\newtheorem{definition}{Definition}[section]

\theoremstyle{remark}
\newtheorem{remark}{Remark}[section]
\newtheorem{example}[remark]{Example}

\title{Generalized inverse diagrams in tribes}
\author{El Mehdi Cherradi}
\affil{IRIF - CNRS - Universit\'e Paris Cit\'e \\MINES ParisTech - Universit\'e PSL}
\date{}

\begin{document}

\maketitle

\begin{abstract}
 Starting from a generalized direct category $R$, we construct an absolutely dense functor $\mathbf{D}_r \to R$ with domain a strict direct category. Given any tribe $\mathcal{T}$, we leverage this construction to provide a tribe structure on a subcategory of fibrant diagrams in $\mathcal{T}^{R^{op}}$, assuming some finiteness condition on $R$.
\end{abstract}

\tableofcontents

\newpage 

\section*{Introduction}
\addcontentsline{toc}{section}{Introduction}

Given a combinatorial model category $\mathcal{M}$ and any small category $C$, both the projective and injective model structure on the category of diagrams $\mathcal{M}^C$ exist. If $C$ is a Reedy category, a third model structure, the Reedy model structure, which is in general different from the other two, always exists even when dropping the combinatoriality assumption on $\mathcal{M}$.

For fibration categories, the situation is a bit different. Indeed, given a fibration category $\mathcal{F}$, the "natural" fibration category structure on a category of $C$-shaped diagrams in $\mathcal{F}$ is arguably the Reedy model structure, assuming that $C$ is an inverse category. By this, we mean that the notion of Reedy model structure adapts in a straightforward manner to yield a definition of Reedy fibrant diagram, forming a category where it is possible to factor any map suitably as required from the axioms of a fibration category. If $C$ is not assumed to be an inverse category, it is still possible to endow the category $\mathcal{F}^C$ with the structure of a fibration category, where both the fibrations and the weak equivalences are the pointwise ones: this is established in \cite[Theorem 9.5.5]{radulescu2006cofibrations}, relying on the Reedy fibration category structure on $\mathcal{F}^{\Delta' \downarrow C}$, where $\Delta'$ is the direct category spanned by the injective maps in the simplex category $\Delta$.
In \cite[Théorème 6.17]{cisinski2010categories}, another fibration category structure is established on a subcategory of "$\tau$-fibrant" diagrams in $\mathcal{F}^C$, working in a slightly different manner with the usual functor $\tau : \Delta' \downarrow C \to C$.

Finally, in the case of tribes, it seems that a tribe structure on a category of diagram $\mathcal{T}^C$, where $\mathcal{T}$ is a tribe and $C$ a small category, has only been considered in the literature when $C$ is an inverse category, in which case the notion of Reedy fibrancy can be used (see \cite[Lemma 2.22]{ks2019internal}).

In the present document, we will be interested in the notion of generalized inverse/direct category, which aims at relaxing the definition of an inverse/direct category in order to allow non-identity isomorphisms. These notions are essentially special cases of generalized Reedy categories, which have been introduced by Cisinski in \cite{cisinski2006prefaisceaux}, then slightly generalized and further discussed by Berger and Moerdijk in \cite{berger2011extension}.

Our objective is to provide a tribe structure on a subcategory of fibrant diagrams $\mathcal{T}^C$, assuming $C$ to be a generalized inverse category.

\section{The "unrolling" construction}

In this section, we consider a generalized direct category $R$, in the sense of the following definition:

\begin{definition}
 We say that a category $R$ is a generalized direct category when there exists a degree function $\mathbf{deg} : \mathbf{Ob}(R) \to \alpha$ for some ordinal $\alpha$ such that invertible morphisms preserve the degree, and non-invertible ones strictly raise it.
 
 We say that $R$ is a generalized inverse category when $R^{op}$ is generalized direct.
\end{definition}

We also consider the following additional data:
\begin{itemize}
 \item A functor $r : R_0 \to R$ from a (strict) direct category, such that every arrow $f : a \to b$ in $R$ lifts to an arrow $k : x \to y$ in $R_0$ up to isomorphism, i.e., such that there is an isomorphism $w : a \simeq r(x)$ and $w' : b \simeq r(y)$ fitting in a commutative square:

\[\begin{tikzcd}[ampersand replacement=\&]
	a \&\& b \\
	\\
	{r(x)} \&\& {r(y)}
	\arrow["f", from=1-1, to=1-3]
	\arrow["w"', from=1-1, to=3-1]
	\arrow["{w'}", from=1-3, to=3-3]
	\arrow["{c(k)}"', from=3-1, to=3-3]
\end{tikzcd}\]
(in other words, the functor $r^\rightarrow : R_0^\rightarrow \to R^\rightarrow$ is required to be essentially surjective on objects).
\end{itemize}

Note that there is a canonical choice, namely considering the subcategory $R_0 \subset R$ spanned by the non-invertible morphisms that are not identities. Since $R$ is generalized direct, this class of arrows is stable under composition, so we indeed get a subcategory.

\begin{example}
 Our main source of examples is that of categories of semi-cubes. In \cite{campion2023cubical}, several variations on the category of cubes are considered. For a given category of cubes $\square$ without isomorphisms, there is usually a counterpart  \textit{with symmetries} $\square_s$. Since we are only interested in direct categories, we consider the subcategories that do not contain the degeneracies.
 
 Hence, concrete examples include the category of semi-cubes $\square_\sharp$, which is the free monoidal category generated by two projections. The corresponding category of semi-cubes with symmetries $\square^s_\sharp$ is the free symmetric monoidal category generated by $\square_\sharp$.
\end{example}
 
 \begin{definition}
  Write $\mathbf{F}$ for the free category comonad on $\mathbf{Cat}$ (on reflexive graphs, or even on graphs if $r$ is surjective on objects), then form the pushout as in the diagram below:

\[\begin{tikzcd}[ampersand replacement=\&]
	{\mathbf{F}(R_0)} \&\& {\mathbf{F}(R)} \\
	\\
	{R_0} \&\& {\mathbf{F}_\simeq(R)} \\
	\&\&\& R
	\arrow[from=1-1, to=1-3]
	\arrow[from=1-1, to=3-1]
	\arrow[from=1-3, to=3-3]
	\arrow[curve={height=-6pt}, from=1-3, to=4-4]
	\arrow[from=3-1, to=3-3]
	\arrow[curve={height=6pt}, from=3-1, to=4-4]
	\arrow["\ulcorner"{anchor=center, pos=0.125, rotate=180}, draw=none, from=3-3, to=1-1]
	\arrow["{p_0}", dashed, from=3-3, to=4-4]
\end{tikzcd}\]
 Finally, write $\mathbf{D}_{R}$ for the full subcategory of the twisted arrow category $\mathbf{Tw}(\mathbf{F}_\simeq(R))$ spanned by the objects consisting of an arrow $x \to y$ of the form $x \to z \to y$ where $x \to z$ comes from a map in $R_0$, and $z \to y$ corresponds to a "free" isomorphism in $R$ or is an identity arrow (that is, an arrow obtained by taking the image under $\mathbf{F}(R) \to \mathbf{F}_\simeq(R)$ of a path of length one in $R$ whose underlying arrow is an isomorphism). It comes with a projection $p : \mathbf{D}_{R} \to R$.
 \end{definition}
 
 \begin{lemma}
  $\mathbf{D}_{R}$ is a (strict) direct category.
 \end{lemma}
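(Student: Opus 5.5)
The plan is to find a normal form for the morphisms of $\mathbf{F}_\simeq(R)$, read off what a morphism of $\mathbf{Tw}(\mathbf{F}_\simeq(R))$ between two objects of $\mathbf{D}_R$ can look like, and then build a degree function.

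\textbf{Normal forms.} First I describe $\mathbf{F}_\simeq(R)$ concretely. Since $\mathbf{F}(R_0)$ and $R_0$ have the same objects, the objects of the pushout are those of $R$. A morphism is represented by a word whose letters are of two kinds:
\begin{itemize}
\item \emph{$R_0$-letters}: morphisms of $R_0$, viewed through $r$;
\item \emph{free letters}: arrows of $R$ not of the form $r(k)$, taken as generators of $\mathbf{F}(R)$.
\end{itemize}
The only relations imposed by the pushout are that adjacent $R_0$-letters compose as in $R_0$, and that identities of $R_0$ are deleted. So I expect a free-product-style normal form: reduced words with no identities and no two adjacent $R_0$-letters. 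I would justify this by the usual argument, exhibiting the category of reduced words and checking it has the universal property of the pushout.

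The crucial feature is that free isomorphism letters never cancel. A letter $i$ followed by $i^{-1}$ is a length-two path in the free category $\mathbf{F}(R)$, not an identity. In particular an object of $\mathbf{D}_R$ has a reduced word of one of four shapes: empty (an identity), $[k]$, $[i]$, or $[k][i]$, with $k$ an $R_0$-letter and $i$ a free iso letter.

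\textbf{Morphisms of $\mathbf{D}_R$.} A morphism from $f$ to $g$ in the twisted arrow category is a pair $(u,v)$ with $g = v\circ f\circ u$. Reducing the concatenated word $v\,f\,u$ can only merge an $R_0$-letter at either boundary; nothing is ever deleted except genuine identities. So the length of $g$ is at least the length of $f$.

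In the equality case I will show the following. On the right of $f$, $v$ must be empty: appending anything after the final iso letter, or after a final $R_0$-letter when $i$ is absent, either lengthens the word or breaks the required shape. On the left, $u$ must be empty or a single $R_0$-letter $u_0$ absorbed into $k$, giving $k' = k\circ u_0$. When $u_0$ is not an identity, the source of $k'$ has strictly smaller degree in $R_0$ than the source of $k$.

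\textbf{Degree function.} I will take a lexicographic degree: first the length $\ell\in\{0,1,2\}$ of the reduced word, then the source degree of the $R_0$-letter, ordered in reverse. The case analysis above then shows that every non-identity morphism of $\mathbf{D}_R$ strictly increases this degree. It also shows that endomorphisms are identities, since equal length together with equal $k$ forces $u$ and $v$ to be trivial.

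\textbf{Main obstacle.} I expect two difficulties.

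The first is rigorously establishing the normal form in $\mathbf{F}_\simeq(R)$, particularly its interaction with the reflexive-graph structure of identities. I would handle it by explicitly constructing the category of reduced words.

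The second is producing an honest ordinal-valued degree. The reversed source degree is not automatically an ordinal. My first attempt is to note that any object $[k][i]$ of $\mathbf{D}_R$ with target $z_0$ only receives maps from objects whose $R_0$-letter has codomain $z_0$. The relevant sources then lie in the slice $R_0\downarrow z_0$, so I can use $\deg_{R_0}(z_0)$ minus a bounded quantity, or simply $\deg_{R_0}(z_0)$ at the first coordinate, as the degree. If this fails in general, I would fall back on the characterization of direct categories as those in which the relation "there is a non-identity map $a\to b$" is well-founded in the reverse direction and acyclic. I would then verify this well-foundedness by the same word-length argument.
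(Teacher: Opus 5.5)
\textbf{There is a genuine gap in your equality-case analysis.} Your bound $\ell(g)\ge\ell(f)$ is fine, at least for injective $r$. The claim that $v$ must be trivial is false, however, when $f=[k]$ has no free isomorphism letter. Post-composing with a non-identity $R_0$-letter $v_0$ just merges with $k$: the pair $(\mathrm{id}_x,v_0)$ is a non-identity morphism $[k]\to[v_0\circ k]$ of $\mathbf{D}_R$, which is a full subcategory of the twisted arrow category. Along this map the reduced word still has length $1$ and the source of the $R_0$-letter is unchanged, so your lexicographic degree stays constant and the argument fails.

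The missing idea is that a degree on a twisted arrow category must see the target as well as the source. The paper takes $\mathbf{deg}(y)-\mathbf{deg}(x)+\epsilon$, where $\epsilon\in\{0,1\}$ records a final free isomorphism and plays the role of your length. A non-identity map must then raise the degree of the target, lower that of the source, or append a free isomorphism. In your language, order first by $\ell$ and then by $\deg(\mathrm{cod}\,k)-\deg(\mathrm{dom}\,k)$. Your case analysis shows that the $R_0$-letter becomes $v_0\circ k\circ u_0$, or $k\circ u_0$ when a free isomorphism is present. So this repaired degree strictly increases, and you no longer need a reversed order.

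\textbf{Your worry about ordinals is not a technicality, and the well-foundedness fallback cannot work in the stated generality.} Take $R=R_0=\omega+1$, a poset, so this is the canonical choice with $r=\mathrm{id}$. Then $\mathbf{F}_\simeq(R)=R$ and $\mathbf{D}_R=\mathbf{Tw}(\omega+1)$. This category contains the infinite chain $\cdots\to(2\to\omega)\to(1\to\omega)\to(0\to\omega)$ of non-identity maps, each given by precomposition, so it is not direct. The paper's subtraction $\mathbf{deg}(y)-\mathbf{deg}(x)$ tacitly assumes $\mathbb{N}$-valued degrees, and your argument needs the same assumption. With it, the degree difference is a natural number.

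A smaller point: your normal form (no two adjacent $R_0$-letters) presumes $r$ is injective, as for the canonical $R_0\subset R$. For a general $r$, adjacent letters in the image of $r$ need not compose in $R_0$, so you should state that hypothesis. The paper also passes over this point.
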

 
 \begin{proof}
 Consider an object of $\mathbf{D}_{R}$ given by a map $f : x \to z \to y$. 
  We take the degree of such an object to be $\mathbf{deg}(y) - \mathbf{deg}(x) + k$, where $k = 0$ if $z \to y$ is an identity arrow, and $k = 1$ otherwise.
  
  With this definition, it is clear that non-identity arrows must stricly increase the degree (either because they stricly increase $\mathbf{deg}(y)$, they stricly decrease $\mathbf{deg}(x)$, or they add a "free" isomorphism, strictly increasing $k$).
\end{proof}
 
 \begin{lemma}
  The canonical functor $p : \mathbf{D}_r \to R$ is absolutely dense (i.e., the precomposition functor $p^* : \mathbf{Set}^{R} \to \mathbf{Set}^{\mathbf{D}_r}$ is fully faithful).
 \end{lemma}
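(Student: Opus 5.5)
To prove that $p$ is absolutely dense, I would use the standard characterization of absolute density. The precomposition functor $p^* : \mathbf{Set}^{R} \to \mathbf{Set}^{\mathbf{D}_r}$ is fully faithful iff the counit $\mathrm{Lan}_p\, p^* \Rightarrow \mathrm{id}$ is an isomorphism. Evaluating on representables, this holds iff for all objects $a,b$ of $R$ the canonical map
\[
\int^{d \in \mathbf{D}_r} R(p(d),b) \times R(a,p(d)) \longrightarrow R(a,b)
\]
is a bijection. Concretely, for each arrow $f : a \to b$ of $R$, let $\mathbf{Fact}(f)$ be the category with:
\begin{itemize}
\item objects the triples $(d, u, v)$ with $d \in \mathbf{D}_r$, $u : a \to p(d)$, $v : p(d) \to b$ and $vu = f$;
\item morphisms $(d,u,v) \to (d',u',v')$ the arrows $g : d \to d'$ in $\mathbf{D}_r$ with $p(g)u = u'$ and $v' p(g) = v$.
\end{itemize}
The goal is to show that each $\mathbf{Fact}(f)$ is nonempty and connected. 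Nonemptiness gives surjectivity of the map above, and connectedness gives injectivity.

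Nonemptiness should be immediate. Since $p$ hits every object up to isomorphism, pick $d$ with an isomorphism $\phi : a \simeq p(d)$. The object of $\mathbf{D}_r$ built from an identity arrow at a lift of $a$, or from a free isomorphism, will do, and $(d, \phi, f\phi^{-1})$ is then an object of $\mathbf{Fact}(f)$. Alternatively one can use the lifting hypothesis on $r$: write $f = w'^{-1}\, r(k)\, w$ with $k : x \to y$ in $R_0$, and use the object of $\mathbf{D}_r$ given by the arrow $k$ itself.

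For connectedness, I would connect every object of $\mathbf{Fact}(f)$ to a normal form, in two stages.
\begin{enumerate}
\item Given $(d,u,v)$ with $d$ an arrow $x \to z \to y$ in $\mathbf{F}_\simeq(R)$, use the twisted-arrow morphisms of $\mathbf{D}_r$ to strip off the free-isomorphism part. The objects $x \to z$ (with $k=0$) and the identity objects map into $d$ via twisted-arrow squares, so by precomposing or postcomposing with the relevant components, $(d,u,v)$ receives or emits a morphism to an object whose underlying arrow is an identity. The point is that the factorization $x \to z \to y$ gives explicit twisted-arrow morphisms $(\mathrm{id}_y) \to (x\to y)$ and $(x \to z) \to (x \to z \to y)$.
\item Show that any two identity-type objects $(d_1,u_1,v_1)$, $(d_2,u_2,v_2)$ lying over isomorphic objects of $R$ are connected. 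Here the free isomorphisms are essential. If $p(d_1) \simeq p(d_2)$ via the isomorphism $v_2^{-1}v_1$ (or $u_2u_1^{-1}$), then the length-one path in $\mathbf{F}(R)$ on that isomorphism gives an object of $\mathbf{D}_r$ of the form $x \to x \to y$ with a free isomorphism. This object admits twisted-arrow morphisms from both $(\mathrm{id}_x)$ and $(\mathrm{id}_y)$, which produces the required zigzag. Non-isomorphic intermediate objects are handled by the $R_0$-part, using the lifting hypothesis on $r$.
\end{enumerate}

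The main obstacle is injectivity, i.e. checking that the zigzags respect the equation $vu = f$ in $R$. The difficulty is that $\mathbf{F}_\simeq(R)$ identifies only the composites coming from $R_0$, so an equality of composites in $R$ need not hold in $\mathbf{F}_\simeq(R)$. I would therefore work throughout with length-one free isomorphisms and single $R_0$-arrows, which are exactly the shapes allowed in $\mathbf{D}_r$. I would then verify at each step that the two legs $u,v$ transported along $p(g)$ compose back to $f$ in $R$. Any relation in $R$ between two different lifts of $f$ must be bridged by inserting an intermediate free-isomorphism object, and keeping track of the twisted direction ($x$ contravariant, $y$ covariant) in each such step is where care is needed.
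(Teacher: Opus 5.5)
There is a genuine gap, and it sits in your step 2, which is where all the content of the lemma lies. The surrounding framework is the paper's and is sound:
\begin{itemize}
\item The coend condition is exactly the connected-factorization criterion of Ad\'amek et al.
\item Nonemptiness is fine.
\item Step 1 is correct, and slightly shorter than the paper's detour through $Z_0$, $Z_1$. The twisted-arrow map $(d,\mathrm{id}_{p(d)})\colon \mathrm{id}_{p(d)}\to d$ lies over an identity, so every object of $\mathbf{Fact}(f)$ is connected to one of the form $(\mathrm{id}_c,u,v)$.
\end{itemize}

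The problem is what comes next. A free-isomorphism object $[\theta]$ only connects $(\mathrm{id}_c,u,v)$ to $(\mathrm{id}_{c'},\theta u,v\theta^{-1})$. So your argument only covers pairs of factorizations related by an isomorphism $\theta$ with $\theta u_1=u_2$ and $v_2\theta=v_1$; writing $v_2^{-1}v_1$ or $u_2u_1^{-1}$ even presupposes invertible legs. It does not connect two factorizations through the same object with different legs. Still less does it connect factorizations through non-isomorphic objects, such as $(\mathrm{id}_a,\mathrm{id}_a,f)$ and $(\mathrm{id}_b,f,\mathrm{id}_b)$. For these you only appeal to the $R_0$-part and the lifting hypothesis, without naming the objects of $\mathbf{D}_r$ through which a zigzag would pass.

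The missing idea is to fix the base point $(\mathrm{id}_b,f,\mathrm{id}_b)$ and, for an arbitrary $(\mathrm{id}_c,u,v)$, to lift the \emph{second leg} $v\colon c\to b$.
\begin{itemize}
\item By the hypothesis on $r$, write $\psi v=r(k)\phi$ with $\phi\colon c\simeq r(x)$, $\psi\colon b\simeq r(y)$ and $k\colon x\to y$ in $R_0$.
\item Set $Z:=[\psi^{-1}]\circ c(k)\colon r(x)\to b$. This is an object of $\mathbf{D}_r$ with $p(Z)=b$ and $p_0(Z)=\psi^{-1}r(k)=v\phi^{-1}$.
\item For any object $W\colon s\to t$ of $\mathbf{D}_r$, the twisted-arrow maps (components on source, target) $(\mathrm{id}_s,W)\colon \mathrm{id}_s\to W$ and $(W,\mathrm{id}_t)\colon \mathrm{id}_t\to W$ lie over $p_0(W)$ and $\mathrm{id}_t$ respectively.
\end{itemize}
Applying the last observation to $W=[\phi]$ and $W=Z$ gives
\[
(\mathrm{id}_c,u,v)\to([\phi],\phi u,v\phi^{-1})\leftarrow(\mathrm{id}_{r(x)},\phi u,v\phi^{-1})\to(Z,f,\mathrm{id}_b)\leftarrow(\mathrm{id}_b,f,\mathrm{id}_b).
\]
This is essentially the paper's zigzag through $Z_1$, $\mathrm{id}_{z''}$ and $Z_2$.

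It also shows that the obstacle you single out is a red herring. No equation of $R$ ever has to be lifted to $\mathbf{F}_\simeq(R)$. Every twisted-arrow morphism used has one component an identity and the other the object itself. All compatibility conditions are checked in $R$ after applying $p$.
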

 
 \begin{proof}
  We rely on the following criterion, established in \cite[Theorem 1.1]{adamek2001functors}: a functor $F : A \to B$ is absolutely dense if and only if, for every morphism $f : b \to b'$ in $B$, the category of $F$-factorization,  that has objects the tuples $(a, b \to F(a), F(a) \to b)$ yielding a factorization of $f$ and has morphisms the maps $a \to a'$ making the corresponding diagrams commute, is a (non-empty) connected category.
  
  Given a morphism $f :  x \to y$ in $R$, there is an obvious factorization through the object of $\mathbf{D}_{R}$ given by the identity arrow $y \to y$. Thus, the category of factorization is not empty. Next, consider a factorization $f : x \to p(Z) \to y$, where $Z : z \to z'$ is a arrow factoring as $[w] \circ Z_0$, and $[w]$ is a "free" isomorphism. Then, there is a morphism $Z_0 \to Z$ in the twisted arrow category, and a diagram
\[\begin{tikzcd}[ampersand replacement=\&]
	\&\& {p(Z)} \&\& \\
	x \&\&\&\& y \\
	\&\& {p(Z_0)}
	\arrow["\delta", from=1-3, to=2-5]
	\arrow["{f'}", from=2-1, to=1-3]
	\arrow["{p_0(w)^{-1} \circ  f'}"', from=2-1, to=3-3]
	\arrow["{p_0(w)}", dashed, from=3-3, to=1-3]
	\arrow["{\delta  \circ p_0(w)}"', from=3-3, to=2-5]
\end{tikzcd}\]
where $w$ is the isomorphism in $R$ corresponding to $[w]$. We hence have a morphism in the category of factorization of $f$. In the case where $Z$ does not end with a "free" isomorphism, then we can just take $Z_0 = Z$, and $w = id_{z'}$.

  Next, by assumption on $R_0$, the morphism $\delta : z' \to y$ can be lifted to $R_0$ (and, then, to $\mathbf{F}_\simeq(R)$), as an arrow $\delta_s$ that comes with a commutative square:

\[\begin{tikzcd}[ampersand replacement=\&]
	{z'} \&\& y \\
	\\
	{z''} \&\& {y'}
	\arrow["\delta", from=1-1, to=1-3]
	\arrow["{w'}"', from=1-1, to=3-1]
	\arrow["v", from=1-3, to=3-3]
	\arrow["{r(\delta_s)}"', from=3-1, to=3-3]
\end{tikzcd}\]

We define the object $Z_1 : z \to z''$ whose underlying arrow is the composite $[w' \circ w] \circ Z_0$, fitting in a diagram, which is a zig-zag in the twisted arrow category $\mathbf{D}_{R}$,
  
\[\begin{tikzcd}[ampersand replacement=\&]
	z \&\& z \&\& {z''} \&\& {z''} \&\& y \\
	\\
	{z'} \&\& {z''} \&\& {z''} \&\& y \&\& y
	\arrow["{Z_0}"', from=1-1, to=3-1]
	\arrow["{id_z}"', from=1-3, to=1-1]
	\arrow["{Z_1}", from=1-3, to=1-5]
	\arrow["{Z_1}"', from=1-3, to=3-3]
	\arrow["{id_{z''}}", from=1-5, to=3-5]
	\arrow["{id_{z''}}"', from=1-7, to=1-5]
	\arrow["{[v^{-1}] \circ c(\delta_s)}", from=1-7, to=1-9]
	\arrow["{Z_2}"', from=1-7, to=3-7]
	\arrow["{id_y}", from=1-9, to=3-9]
	\arrow["{[w' \circ w]}"', from=3-1, to=3-3]
	\arrow["{id_{z''}}", from=3-5, to=3-3]
	\arrow["{[v^{-1} ]\circ c(\delta_s)}"', from=3-5, to=3-7]
	\arrow["{id_y}", from=3-9, to=3-7]
\end{tikzcd}\]
and where we also defined $Z_2$ as another intermediate step.

This finally yields a zig-zag of factorization,
\[\begin{tikzcd}[ampersand replacement=\&]
	\&\&\& {p(Z)} \\
	\&\&\& {p(Z_0)} \\
	\&\&\& {p(Z_1)} \\
	x \&\&\& {p(id_{z''})} \&\&\& y \\
	\&\&\& {p(Z_2)} \\
	\&\&\& {p(id_y)}
	\arrow["\delta", from=1-4, to=4-7]
	\arrow["w", dashed, from=2-4, to=1-4]
	\arrow["{w' \circ w}"{description}, dashed, from=2-4, to=3-4]
	\arrow["{\delta \circ w = v^{-1} \circ r(\delta_s) \circ w' \circ w}"{description}, from=2-4, to=4-7]
	\arrow["{v^{-1} \circ r(\delta_s)}"{description}, from=3-4, to=4-7]
	\arrow["{f'}", from=4-1, to=1-4]
	\arrow["{w^{-1} \circ f'}"{description}, from=4-1, to=2-4]
	\arrow["{w' \circ f'}"{description}, from=4-1, to=3-4]
	\arrow["{w' \circ f'}"{description}, from=4-1, to=4-4]
	\arrow["f"{description}, from=4-1, to=5-4]
	\arrow["f"', from=4-1, to=6-4]
	\arrow["{id_{z''}}"{description}, from=4-4, to=3-4]
	\arrow["{v^{-1} \circ r(\delta_s)}"{description}, from=4-4, to=4-7]
	\arrow["{v^{-1} \circ r(\delta_s)}"{description}, from=4-4, to=5-4]
	\arrow["{id_y}"{description}, from=5-4, to=4-7]
	\arrow["{id_y}"', from=6-4, to=4-7]
	\arrow["{id_y}", dashed, from=6-4, to=5-4]
\end{tikzcd}\]
thus proving that the category of factorization of $f$ is connected.
 \end{proof}

\section{Diagrams in a tribe}

In this section, we assume that $R$ has moreover finitely many objects in each degree, and finitely many isomorphisms in each degree as well.

 From now on, we also fix a tribe $\mathcal{T}$. Our objective is to provide a tribe structure for some category of "fibrant" diagrams in $\mathcal{T}^{R^{op}}$. The notion of fibrations we introduce below is reminiscent of the definition of $\tau$-fibrations considered in \cite[Théorème 6.17]{cisinski2010categories}.

 \begin{definition}
  We define the class of $p$-fibrations as the class of morphisms $m : F \to F'$ in $\mathcal{T}^{R^{op}}$ such that $p^* m$ is a Reedy fibration in $\mathcal{T}^{\mathbf{D}_r^{op}}$.
 \end{definition}
 
 \begin{definition}
  We define $\mathcal{T}^{R^{op}}_f$ as the full subcategory of $\mathcal{T}^{R^{op}}$ spanned by the $p$-fibrant diagrams, and we define the fibrations between two such diagrams to be the $p$-fibrations.
 \end{definition}
 
 We will need the following definition and theorem to reach our goal:
 
 \begin{definition}[Definition 2.12 and 2.15 in \cite{hirschhorn2019functors}]
 \label{fibering_def}
  Consider a functor $G : \mathcal{C} \to \mathcal{D}$ between Reedy categories. For every object $\alpha$ in $\mathcal{C}$, for every object $\beta$ in $\mathcal{D}$, and map $\sigma : \alpha \to G(\beta)$ in the distinguished subcategory $\mathcal{D}_+$, define the category $\mathbf{Fact}_{\mathcal{C}_+}(\alpha, \sigma)$ whose objects are the factorization of $\sigma$ given by diagrams

\[\begin{tikzcd}[ampersand replacement=\&]
	\alpha \&\& {G(\gamma)} \\
	\& {G(\beta)}
	\arrow["\mu", from=1-1, to=1-3]
	\arrow["\sigma"', from=1-1, to=2-2]
	\arrow["{G(\nu)}", from=1-3, to=2-2]
\end{tikzcd}\]
where $\nu$ is an arrow in $\mathcal{C}_+$,
and whose maps between two such factorization $(\mu,\nu)$ and $(\mu',\nu')$ are the arrows $\tau : \gamma \to \gamma'$ making the following two diagrams commute:
\[\begin{tikzcd}[ampersand replacement=\&]
	\gamma \&\& {\gamma'} \&\& \alpha \\
	\& \beta \&\& {G(\gamma)} \&\& {G(\gamma')}
	\arrow["\tau", from=1-1, to=1-3]
	\arrow[from=1-1, to=2-2]
	\arrow[from=1-3, to=2-2]
	\arrow[from=1-5, to=2-4]
	\arrow[from=1-5, to=2-6]
	\arrow["{G(\tau)}"', from=2-4, to=2-6]
\end{tikzcd}\]

  The functor $G$ is said to be cofibering when all the categories $\mathbf{Fact}_{\mathcal{C}_+}(\alpha, \sigma)$ are either empty or connected. $G$ is said to be fibering when $G^{op} : \mathcal{C}^{op} \to \mathcal{D}^{op}$ is cofibering.
 \end{definition}
 
 \begin{theorem}[Adapted from {\cite[Theorem 4.2]{hirschhorn2019functors}}]
 \label{fibering_thm}
  Let $G : \mathbf{C} \to \mathbf{D}$ be a fibering functor between inverse categories, and $\mathcal{T}$ be a tribe. Then the precomposition functor $$\mathcal{T}^\mathbf{D} \to \mathcal{T}^\mathbf{C}$$ maps Reedy fibrations to Reedy fibration. In particular, the induced functor $$\mathcal{T}^\mathbf{D}_r \to \mathcal{T}^\mathbf{C}_R$$ between the tribes of Reedy fibrant diagrams is a morphism of tribe.
 \end{theorem}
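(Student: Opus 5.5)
We must prove \cref{fibering_thm}: for a fibering functor $G : \mathbf{C} \to \mathbf{D}$ between inverse categories, precomposition $G^* : \mathcal{T}^\mathbf{D} \to \mathcal{T}^\mathbf{C}$ sends Reedy fibrations to Reedy fibrations, and is therefore a morphism of tribes on Reedy fibrant diagrams.

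The plan is to follow Hirschhorn's argument and check that it only uses finite limits along fibrations. Fix a Reedy fibration $m : X \to Y$ in $\mathcal{T}^\mathbf{D}$ and an object $\alpha$ of $\mathbf{C}$. We must show that the relative matching map
$$X(G\alpha) \to Y(G\alpha) \times_{M_\alpha G^*Y} M_\alpha G^*X$$
is a fibration in $\mathcal{T}$. Here $M_\alpha G^*X = \lim_{\alpha \to \gamma \in \partial(\alpha\downarrow \mathbf{C})} X(G\gamma)$ is the limit over the non-identity arrows out of $\alpha$. The domain of this limit is finite, since we are in an inverse setting with the finiteness needed for Reedy fibrant diagrams in a tribe to make sense.

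The key step is to rewrite this limit as an iterated limit over $\mathbf{D}$. Each non-identity $\nu : \alpha \to \gamma$ gives a map $\sigma = G(\nu)$ out of $G\alpha$ in $\mathbf{D}$. Grouping the terms according to $\sigma$ gives
$$M_\alpha G^*X \cong \lim_{\sigma : G\alpha \to \beta} \ \lim_{\mathbf{Fact}(\alpha,\sigma)} X(\beta).$$
This is where the fibering hypothesis enters. Each factorization category is empty or connected, so the inner limit of the constant diagram $X(\beta)$ is either the terminal object or $X(\beta)$ itself. Consequently $M_\alpha G^*X$ is the limit of $X$ over the full subcategory $S_\alpha$ of $G\alpha \downarrow \mathbf{D}$ spanned by those $\sigma$ whose factorization category is nonempty. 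The same description holds for $Y$, naturally in $m$.

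When $\sigma$ is the identity of $G\alpha$, the factorizations through identity-preserving maps are handled separately, as in Hirschhorn, splitting off the identity term. It remains to show that for a Reedy fibration $m$ and any subcategory $S \subseteq \partial(G\alpha\downarrow \mathbf{D})$ that is closed under precomposition (a cosieve), the map
$$\lim_{\partial(G\alpha\downarrow\mathbf{D})} X \to \lim_{S} X \times_{\lim_S Y} \lim_{\partial(G\alpha\downarrow\mathbf{D})} Y$$
is a fibration. Composing this with the Reedy matching fibration $X(G\alpha) \to Y(G\alpha)\times_{M_{G\alpha}Y} M_{G\alpha}X$ then gives the required map. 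This is the standard lemma that restricting along an inclusion of cosieves of a finite inverse category sends Reedy fibrations to fibrations. I would prove it by induction on the objects of $\partial(G\alpha\downarrow\mathbf{D})\setminus S$ added in order of degree. Each step is a pullback of a relative matching map of $m$, which is a fibration, and fibrations in a tribe are stable under pullback and composition. These pullbacks exist because the maps involved are fibrations.

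I expect the main obstacle to be verifying that $S_\alpha$ really is a cosieve of the right form and that the identity case is treated correctly, since Hirschhorn's setting has a general Reedy $\mathcal{C}_+$. Everything else reduces to pullback-stability in a tribe. For the last clause of the theorem: $G^*$ preserves all limits pointwise, so it preserves the terminal object and pullbacks along fibrations. It preserves fibrant objects and fibrations by the above. It preserves anodyne maps because homotopy equivalences, and hence anodyne maps between fibrant objects, are sent to homotopy equivalences: $G^*$ preserves path objects, since it preserves fibrations and pointwise products.
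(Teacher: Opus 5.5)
Your argument for the first claim is sound. In substance it is the proof of \cite[Theorem 4.2]{hirschhorn2019functors}, which the paper only invokes, written out for a tribe. You identify $M_\alpha G^*X$ with $\lim_{S_\alpha}X$. You then factor the relative matching map of $G^*m$ at $\alpha$ as the relative matching map of $m$ at $G\alpha$, followed by a pullback of a map assembled from pullbacks of relative matching maps of $m$. Two points should be tightened:
\begin{itemize}
\item The ``grouping'' formula is not an honest iterated limit, because $\sigma\mapsto X(\beta)^{\pi_0\mathbf{Fact}(\alpha,\sigma)}$ is not functorial in $\sigma$. The correct justification is that $\nu\mapsto G\nu$ is an initial functor $\partial(\alpha\downarrow\mathbf{C})\to S_\alpha$: its comma category over $\sigma$ is exactly $\mathbf{Fact}(\alpha,\sigma)$, which is nonempty and connected for $\sigma\in S_\alpha$.
\item $S_\alpha$ is closed under postcomposition, not precomposition. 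If some non-identity $\nu$ has $G\nu=\mathrm{id}$, then $S_\alpha$ contains $\mathrm{id}_{G\alpha}$, so it is all of $G\alpha\downarrow\mathbf{D}$. The relative matching map is then an isomorphism, which settles the identity case.
\end{itemize}

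The genuine gap is in the last clause. A morphism of tribes must send anodyne maps (maps with the left lifting property against all fibrations) to anodyne maps. Your argument at best shows that $G^*$ sends them to homotopy equivalences, which is strictly weaker. A trivial fibration that is not an isomorphism is a homotopy equivalence but is never anodyne, since lifting it against itself would produce an inverse. The argument is also circular. A path object $X\to PX\to X\times X$ requires the reflexivity map to be anodyne, so ``$G^*$ preserves path objects'' already presupposes what you want to prove. The missing ingredient, which the paper uses, is that in the tribe of Reedy fibrant diagrams on an inverse category the anodyne maps are exactly the pointwise anodyne maps \cite[Lemma 2.22]{ks2019internal}. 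Given this, the clause is immediate, because precomposition with $G$ sends pointwise anodyne maps to pointwise anodyne maps.
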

 
 \begin{proof}
  In \cite{hirschhorn2019functors}, the authors consider model categories rather than tribes, and prove that the precomposition functor is right Quillen under the assumption on $G$. However, the proof they give also applies to the settings of tribes in that it only relies on axioms that also hold for tribes. More precisely, the pullbacks they form are always pullbacks along fibrations, which are also available in tribes. The additional properties that are made use of are the stability of the class of fibrations by composition and pullback, as well as the fact that all isomorphisms are fibrations, which is also true in any tribe. Therefore, we can conclude that the precomposition functor $$\mathcal{T}^\mathbf{D} \to \mathcal{T}^\mathbf{C}$$ maps Reedy fibrations to Reedy fibration. Since the anodyne maps in $\mathcal{T}^\mathbf{C}_R$ and $\mathcal{T}^\mathbf{D}_r$ are the pointwise anodyne maps (see \cite[Lemma 2.22]{ks2019internal}), the precomposition functor also maps anodyne maps to anodyne maps. This functor also preserves limits, in particular the terminal object and the pullbacks along fibrations. This means that we indeed get a morphism of tribe: $$\mathcal{T}^\mathbf{D}_r \to \mathcal{T}^\mathbf{C}_R$$
\end{proof}
 
 The following proposition is the key step in order to establish that the notion of $p$-fibration is well-behaved.
 
 \begin{proposition}
  The right Kan extension functor $$p_* : \mathcal{T}^{\mathbf{D}_r^{op}}_f \to \mathcal{T}^{R^{op}}$$ is well-defined and takes values in fibrant diagrams. Moreover, $p_*$ maps fibrations to $p$-fibrations and pointwise anodyne maps to pointwise anodyne maps.
 \end{proposition}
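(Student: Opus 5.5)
Since $\mathcal{T}$ need not have all limits, the plan is to compute $p_*$ by explicit limits that are reached by iterated pullbacks along fibrations. For $X$ a Reedy fibrant diagram on $\mathbf{D}_r^{op}$ and $a \in R$, the pointwise formula is
$$(p_*X)(a) = \lim_{(Z,\ p(Z)\to a)\in (a\downarrow p)^{op}} X(Z),$$
where the comma category is taken for $p : \mathbf{D}_r \to R$ with the contravariance built in.

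First I will show this index category is equivalent to (or has a final/initial subcategory given by) a finite inverse category. The finiteness assumptions on $R$ (finitely many objects and isomorphisms in each degree) should make the relevant part of $\mathbf{D}_r$ over $a$ finite, since its objects are built from maps in $R_0$ and free isomorphisms. Then $(p_*X)(a)$ is a Reedy-type limit of a Reedy fibrant diagram over a finite inverse category. Such a limit exists in a tribe and is computed by iterated pullbacks along the matching maps, which are fibrations. Equivalently, I will write the comma category as the image of a functor from $\mathbf{D}_r$-slices and apply Theorem~\ref{fibering_thm}: the restriction of $X$ to the slice is Reedy fibrant, and its limit is obtained by pulling back along fibrations. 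This establishes that $p_*$ is well-defined.

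Next I show that $p_*X$ is $p$-fibrant, i.e.\ that $p^*p_*X$ is Reedy fibrant on $\mathbf{D}_r^{op}$. The matching object of $p^*p_*X$ at $Z$ is a limit over $\partial(Z\downarrow \mathbf{D}_r)$ of the values $(p_*X)(p(Z'))$. Each of these is itself a limit of $X$. Combining the two limits, I expect to exhibit the matching map $p^*p_*X(Z)\to M_Z(p^*p_*X)$ as a limit of $X$ restricted along a map of finite inverse index categories, namely a sieve inclusion $\partial K \subset K$. Such a map is a fibration by the standard result for Reedy fibrant diagrams: restriction to a sieve induces a fibration on limits. The same argument, run with relative matching maps, shows that a Reedy fibration $X\to Y$ yields a $p$-fibration $p_*X\to p_*Y$. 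I expect this identification of matching objects of $p^*p_*X$ with limits over subcategories to be the main obstacle. The way in is to use the absolute density from the previous lemma together with the explicit description of objects of $\mathbf{D}_r$ as $x\to z\to y$: the slices of $\mathbf{D}_r$ over $p(Z)$ should then decompose into a part coming from $Z$'s own slice plus a "free isomorphism" layer, and it is this decomposition that realizes the boundary as a sieve.

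Finally, for an anodyne map $X\to Y$ (pointwise anodyne) between fibrant diagrams, $(p_*X)(a)\to(p_*Y)(a)$ is a map between limits of Reedy fibrant finite inverse diagrams induced by a pointwise anodyne map. In a tribe, anodyne maps are stable under pullback along fibrations and under composition. A pointwise-anodyne map of Reedy fibrant diagrams over a finite inverse category therefore induces an anodyne map on limits. I will prove this by induction on the number of objects, building the limit as iterated pullbacks; this is the tribe version of Reedy limits being right Quillen, cf.\ \cite[Lemma 2.22]{ks2019internal}. This gives that $p_*$ preserves pointwise anodyne maps.
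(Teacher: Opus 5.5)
The gap is in your second step, and it cannot be closed; in fact the statement is false as stated. (Your first and third steps are sound. The projection $p\downarrow a\to\mathbf{D}_r$, with objects $(W,\,u:p(W)\to a)$, is a discrete fibration, so a Reedy fibrant $X$ restricts to a Reedy fibrant diagram on the inverse category $(p\downarrow a)^{op}$, whose limit is built by pullbacks along fibrations. This grants that $p\downarrow a$ is finite, which actually also needs finite hom-sets in $R$, not only finitely many objects and isomorphisms per degree.)

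\textbf{Why the second step fails.} The matching map of $p^*p_*X$ at $Z$ is the map of limits induced by the functor
$\int_{(\phi:Z'\to Z)\in\partial(\mathbf{D}_r\downarrow Z)}(p\downarrow p(Z'))\to p\downarrow p(Z)$, $(\phi,W,u)\mapsto(W,p(\phi)\circ u)$.
This is not a sieve inclusion once $Z$ receives two maps from the same $Z'$ with different images under $p$. The free-isomorphism layer produces exactly this, because a free isomorphism can be absorbed on either side in the twisted arrow category.

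\textbf{Counterexample.} Take the paper's example $R=G=\mathbb{Z}/2=\{1,g\}$. Here $\mathbf{D}_G$ has objects $e,[g]$ and two maps $\iota_1,\iota_2:e\to[g]$ with $p(\iota_1)=1$ and $p(\iota_2)=g$. So $M_{[g]}(p^*p_*X)=P\times P$ with $P=(p_*X)(*)$, and the matching map is $\langle\mathrm{id},(p_*X)(g)\rangle$, the graph of an automorphism. Now take $\mathcal{T}=\mathbf{Cat}$ with isofibrations, $X(e)=1$, and $X([g])=I$ the walking isomorphism.
\begin{itemize}
\item $X$ is Reedy fibrant.
\item $P=I\times I$, with $g$ acting by the swap.
\item The matching map $(x,y)\mapsto(x,y,y,x)$ is not an isofibration: the isomorphism $(0,0,0,0)\to(1,0,0,0)$ has no lift.
\end{itemize}
Hence $p_*X$ is not $p$-fibrant, and $p_*(X\to 1)$ is not a $p$-fibration. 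No decomposition of the slices will turn the boundary into a sieve.

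\textbf{The paper's proof fails at the same point.} It writes $p^*p_*\cong(\pi_1)_*\pi_0^*$ via the comma square. It then asserts that $\pi_0:p\downarrow p\to\mathbf{D}_r$ is cofibering, because $\pi_0$ is a Grothendieck fibration. In the Hirschhorn--Voli\'c definition, however, the factorizations must go through non-identity maps of $p\downarrow p$. At the object $([g],[g],u)$, the factorizations of $\mathrm{id}_{[g]}$ are the maps from $([g],e,u)$ and from $([g],e,g^{-1}u)$, and these lie in different components. Correspondingly, $\pi_0^*X$ has matching map the diagonal $X([g])\to X([g])\times_{X(e)\times X(e)}X([g])$ there, which is not a fibration in a general tribe. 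In the example above it is the diagonal $I\to I\times I$. This is the same obstruction you ran into, seen one step earlier.
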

 
 \begin{proof}
  First, observe that $p_*$ can be computed pointwise as limits of finite fibrant diagrams. Indeed, the Kan extension is computed pointwise from (the dual of) the following comma category square:

\[\begin{tikzcd}[ampersand replacement=\&]
	{P_\alpha} \&\& {p \downarrow p} \&\& {\mathbf{D}_r} \\
	\\
	{*} \&\& {\mathbf{D}_r} \&\& R
	\arrow[from=1-1, to=1-3]
	\arrow[from=1-1, to=3-1]
	\arrow["\ulcorner"{anchor=center, pos=0.125}, draw=none, from=1-1, to=3-3]
	\arrow["{\pi_0}", from=1-3, to=1-5]
	\arrow["{\pi_1}"', from=1-3, to=3-3]
	\arrow[between={0.3}{0.7}, Rightarrow, from=1-5, to=3-3]
	\arrow["p", from=1-5, to=3-5]
	\arrow["\alpha"', from=3-1, to=3-3]
	\arrow["p"', from=3-3, to=3-5]
\end{tikzcd}\]
where $P_\alpha$ is finite since the degree (in $\mathbf{D}_r$) of any of its objects is bounded by $n+1$, where $n := \mathbf{deg}(p(\alpha))$, and since there are finitely many objects of each degree.

Next, we claim that the projection $\pi_0 : p \downarrow p \to \mathbf{D}_r$ is a cofibering Reedy functor (\cref{fibering_def}).
To show this, consider a map $f : \alpha \to \pi_0 X$. Since $\pi_0$ is a Grothendieck fibration, there exists a cartesian lifting for $f$, providing a trivial factorization for $f$, so that the category of factorization of $f$ is non-empty. Moreover, the cartesian lifting is of the following form,
\[\begin{tikzcd}[ampersand replacement=\&]
	x \&\& \bullet \\
	\\
	y \&\& \bullet
	\arrow["\alpha"', from=1-1, to=3-1]
	\arrow["{X_t}", from=1-3, to=3-3]
	\arrow[from=3-1, to=3-3]
\end{tikzcd}\]
where $X_t$ is the second component of the object $X \in p \downarrow p$.

Now, any factorization as in the diagram below,
\[\begin{tikzcd}[ampersand replacement=\&]
	\alpha \&\& {\pi_0 Y} \\
	\& {\pi_0 X}
	\arrow["g", from=1-1, to=1-3]
	\arrow["f"', from=1-1, to=2-2]
	\arrow["{\pi_0 H}", from=1-3, to=2-2]
\end{tikzcd}\]
where $H$ is a non-identity arrow, corresponds to a diagram as follows:
\[\begin{tikzcd}[ampersand replacement=\&]
	\&\&\&\&\& \bullet \\
	\&\&\& \bullet \\
	x \&\&\&\& \bullet \& \bullet \\
	\&\& \bullet \& \bullet \\
	y \&\&\&\& \bullet \\
	\&\& \bullet
	\arrow["{Y_t}", from=1-6, to=3-6]
	\arrow[from=2-4, to=3-1]
	\arrow["{Y_s}"', from=2-4, to=4-4]
	\arrow["\alpha"', from=3-1, to=5-1]
	\arrow[from=3-5, to=1-6]
	\arrow["{X_t}", from=3-5, to=5-5]
	\arrow[from=3-6, to=5-5]
	\arrow[from=4-3, to=2-4]
	\arrow[from=4-3, to=3-1]
	\arrow["{X_s}"', from=4-3, to=6-3]
	\arrow[from=4-4, to=3-6]
	\arrow[from=4-4, to=6-3]
	\arrow[from=5-1, to=4-4]
	\arrow[from=5-1, to=6-3]
	\arrow[from=6-3, to=5-5]
\end{tikzcd}\]

It is not difficult to complete this diagram in order by a zig-zag leading to the cartesian lifting of $f$, as we do below,

\[\begin{tikzcd}[ampersand replacement=\&]
	\&\&\& \bullet \\
	\& x \&\&\& \bullet \\
	\&\& x \& \bullet \\
	\& y \&\&\& \bullet \& \bullet \\
	\&\& y \& \bullet \\
	x \&\&\&\& \bullet \& \bullet \\
	\&\& \bullet \& \bullet \\
	y \&\&\&\& \bullet \\
	\&\& \bullet
	\arrow[from=1-4, to=2-5]
	\arrow["{X_t}"', from=1-4, to=3-4]
	\arrow["id", from=2-2, to=3-3]
	\arrow["\alpha"', from=2-2, to=4-2]
	\arrow["{Y_t}", from=2-5, to=4-5]
	\arrow["\alpha", from=3-3, to=5-3]
	\arrow[from=3-3, to=6-1]
	\arrow[from=4-2, to=3-4]
	\arrow[from=4-5, to=3-4]
	\arrow["id"'{pos=0.3}, from=4-5, to=6-6]
	\arrow[from=4-6, to=2-5]
	\arrow["{Y_t}", color={rgb,255:red,41;green,41;blue,163}, from=4-6, to=6-6]
	\arrow["id", from=5-3, to=4-2]
	\arrow[from=5-3, to=4-5]
	\arrow[from=5-3, to=7-4]
	\arrow[from=5-4, to=3-3]
	\arrow[color={rgb,255:red,41;green,41;blue,163}, from=5-4, to=6-1]
	\arrow["{Y_s}"', color={rgb,255:red,41;green,41;blue,163}, from=5-4, to=7-4]
	\arrow["\alpha"', color={rgb,255:red,41;green,41;blue,163}, from=6-1, to=8-1]
	\arrow[color={rgb,255:red,41;green,41;blue,163}, from=6-5, to=4-6]
	\arrow["{X_t}", color={rgb,255:red,41;green,41;blue,163}, from=6-5, to=8-5]
	\arrow[color={rgb,255:red,41;green,41;blue,163}, from=6-6, to=8-5]
	\arrow[color={rgb,255:red,41;green,41;blue,163}, from=7-3, to=5-4]
	\arrow[color={rgb,255:red,41;green,41;blue,163}, from=7-3, to=6-1]
	\arrow["{X_s}"', color={rgb,255:red,41;green,41;blue,163}, from=7-3, to=9-3]
	\arrow[color={rgb,255:red,41;green,41;blue,163}, from=7-4, to=6-6]
	\arrow[color={rgb,255:red,41;green,41;blue,163}, from=7-4, to=9-3]
	\arrow[from=8-1, to=5-3]
	\arrow[color={rgb,255:red,41;green,41;blue,163}, from=8-1, to=7-4]
	\arrow[color={rgb,255:red,41;green,41;blue,163}, from=8-1, to=9-3]
	\arrow[color={rgb,255:red,41;green,41;blue,163}, from=9-3, to=8-5]
\end{tikzcd}\]
hence proving the category of factorization to be connected.

If follows that this induces by precomposition a morphisms of tribes $\mathcal{T}^{\mathbf{D}_r^{op}}_f \to \mathcal{T}^{(p \downarrow p)^{op}}_f$ (by \cref{fibering_thm}). We claim that the right Kan extension along the second projection $\pi_1$ takes value in fibrant diagrams, maps Reedy fibrations to Reedy fibrations and pointwise anodyne maps to pointwise anodyne maps. The first two points follow from \cite[Theorem 9.4.3 (2)]{radulescu2006cofibrations}. The third one is proved similarly to \cite[Theorem 9.3.5 (2c)]{radulescu2006cofibrations}, which applies to weak equivalences but only relies on stability under composition and pullback along fibrations of this class, which is also enjoyed by the class of anodyne maps in a tribe, as well as the so-called Gluing lemma for tribes (\cite[Lemma 2.19]{ks2019internal}).

This allows us to conclude that $p_*$ takes value in $p$-fibrant diagrams and maps fibrations to $p$-fibrations. Moreover, a map $X \to Y$ in $\mathcal{T}^{R^{op}}$ is a pointwise anodyne map as soon as its image under $p^*$ is a pointwise anodyne map since $p$ is surjective on objects, therefore $p_*$ also maps pointwise anodyne maps to pointwise anodyne maps.
 \end{proof}
 
 We can now establish our main result:
 
 \begin{theorem}
 \label{main_result}
  $\mathcal{T}^{R^{op}}_f$ enjoys the structure of a tribe.
 \end{theorem}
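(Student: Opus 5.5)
We need to verify the tribe axioms for $\mathcal{T}^{R^{op}}_f$: it has a terminal object and every object is fibrant; fibrations (the $p$-fibrations) contain isomorphisms and are closed under composition and under pullback along arbitrary maps in $\mathcal{T}^{R^{op}}_f$, with the pullback remaining in $\mathcal{T}^{R^{op}}_f$; every map factors as an anodyne map followed by a fibration; and anodyne maps are stable under pullback along fibrations. The guiding idea is to transfer everything along the fully faithful functor $p^* : \mathcal{T}^{R^{op}} \to \mathcal{T}^{\mathbf{D}_r^{op}}$, using the Reedy tribe structure on $\mathcal{T}^{\mathbf{D}_r^{op}}_f$ (\cite[Lemma 2.22]{ks2019internal}, applicable since $\mathbf{D}_r$ is strict direct), together with the preceding proposition on $p_*$.

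\textbf{The limit axioms.} The terminal diagram is sent by $p^*$ to the terminal diagram, which is Reedy fibrant, so it is $p$-fibrant. Isomorphisms and composites of $p$-fibrations are $p$-fibrations because $p^*$ is a functor and Reedy fibrations have these properties. For pullbacks, take a $p$-fibration $m : F \to F'$ and any $g : G \to F'$ in $\mathcal{T}^{R^{op}}_f$. Then $p^*m$ is a Reedy fibration between Reedy fibrant diagrams, so its pullback along $p^*g$ exists in $\mathcal{T}^{\mathbf{D}_r^{op}}$; it is computed pointwise, and $p^*m$ is in particular a pointwise fibration. The pointwise pullback therefore already exists in $\mathcal{T}^{R^{op}}$, and $p^*$ preserves it. Its projection to $G$ is sent to the pullback of a Reedy fibration, hence a Reedy fibration, and the pullback object is $p$-fibrant because $G$ is.

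\textbf{Anodyne maps.} I would declare a map anodyne when it has the left lifting property against $p$-fibrations, and prove that pointwise anodyne maps between $p$-fibrant objects are anodyne. For a lifting problem against a $p$-fibration $q$, apply $p^*$: the map $p^*j$ is pointwise anodyne, hence anodyne in the Reedy tribe, so it lifts against the Reedy fibration $p^*q$. Since $p^*$ is fully faithful (absolute density), the lift descends to $\mathcal{T}^{R^{op}}$. Stability of anodyne maps under pullback along fibrations follows once every anodyne map is shown to be a retract of a pointwise anodyne one (retract argument after the factorization). Pointwise anodyne maps are stable under pullback along pointwise fibrations, since the pullback is pointwise and anodyne maps in $\mathcal{T}$ are stable along fibrations; retracts are preserved as well.

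\textbf{Factorization, the main obstacle.} Given $f : X \to Y$ in $\mathcal{T}^{R^{op}}_f$, I would first factor $p^*f$ in the Reedy tribe as $p^*X \xrightarrow{i} E \xrightarrow{q} p^*Y$, with $i$ pointwise anodyne and $q$ a Reedy fibration. Then I would apply $p_*$ and use the proposition: $p_*q$ is a $p$-fibration and $p_*i$ is pointwise anodyne, with $p_*E$ $p$-fibrant. Since $p^*$ is fully faithful, the unit $\eta : Z \to p_*p^*Z$ is an isomorphism, so we obtain
\[ X \cong p_*p^*X \xrightarrow{p_*i} p_*E \xrightarrow{p_*q} p_*p^*Y \cong Y. \]
This composite is $f$ by naturality of $\eta$. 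The delicate points are two. First, $p_*$ must be defined on $p^*X$ and on $E$; it is, since both are Reedy fibrant and the proposition covers $\mathcal{T}^{\mathbf{D}_r^{op}}_f$. Second, the unit must be invertible even though $\mathcal{T}$ lacks general limits. Pointwise, $p_*p^*Z(a)$ is a finite limit which exists by the proposition, and we need it to equal $Z(a)$. Here I would argue representably: apply $\mathcal{T}(T,-)$ to reduce to $\mathbf{Set}$-valued diagrams, where absolute density gives exactly that $p^*$ is fully faithful, i.e.\ the unit is an isomorphism. I expect this representable reduction, and checking that it is compatible with the specific limits produced by the proposition, to be the part requiring the most care.
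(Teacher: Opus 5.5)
Your proposal is correct and follows essentially the same route as the paper. You transfer along $p^*$, factor in the Reedy tribe on $\mathbf{D}_r^{op}$ and push back with $p_*$ using $p_*p^*\cong\mathrm{id}$, lift pointwise anodyne maps against $p$-fibrations through $p^*$, and use the retract argument to identify anodyne maps with pointwise anodyne ones. You are also more careful than the paper on the clan axioms and on why the unit $Z\to p_*p^*Z$ is invertible in $\mathcal{T}$; note that your representable reduction tacitly uses the self-duality of the connectedness criterion, so that absolute density applies to $R^{op}$-shaped diagrams.
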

 
 \begin{proof}
  The class of $p$-fibrations is clearly stable under pullback and composition. Moreover, pointwise anodyne maps are stable under pullback along $p$-fibrations because the pointwise anodyne maps in $\mathcal{T}^{\mathbf{D}_{\square^s}^{op}}_R$ are the anodyne maps. 
  Given a map $F : X \to Y$ between diagrams in $\mathcal{T}^{R^{op}}_f$ , we can take its image under $p^*$, then factor it into a pointwise anodyne map followed by a fibration. Taking the image through $p_*$ yields back a factorization $X \simeq p_* p^* X \to Z \to Y \simeq p_* p^* Y$ of $F$ as a pointwise anodyne map followed by a $p$-fibration.
  We still need to check that the pointwise anodyne maps are the anodyne maps in $\mathcal{T}^{R^{op}}_f$. It is clear that pointwise anodyne maps are anodyne: this is because any lifting problem of the form

\[\begin{tikzcd}[ampersand replacement=\&]
	A \&\& X \\
	\\
	B \&\& Y
	\arrow[from=1-1, to=1-3]
	\arrow["\sim"{description}, hook', from=1-1, to=3-1]
	\arrow[two heads, from=1-3, to=3-3]
	\arrow[from=3-1, to=3-3]
\end{tikzcd}\]
where $X \to Y$ is a $p$-fibration, and $A \to B$ a pointwise anodyne maps, we may take the image of this problem through $p^*$, take a solution in $\mathcal{T}^{\mathbf{D}_r^{op}}_f$, and take again the image under $p_*$, which yields back the original problem together with the desired lift since $p_* \circ p^* \simeq id_{\mathcal{T}^{R^{op}}}$. The usual retract argument proves the converse: if $A \to B$ is anodyne, we can factor it as a pointwise anodyne $A \to C$ map followed by a $p$-fibration $C \to B$, and solve the lifting problem below,
\[\begin{tikzcd}[ampersand replacement=\&]
	A \&\& C \\
	\\
	B \&\& B
	\arrow[from=1-1, to=1-3]
	\arrow["\sim"{description}, hook', from=1-1, to=3-1]
	\arrow[two heads, from=1-3, to=3-3]
	\arrow["h"', dashed, from=3-1, to=1-3]
	\arrow[from=3-1, to=3-3]
\end{tikzcd}\]
hence exhibiting $A \to B$ as a retract of $A \to C$, so that $A \to B$ is also a pointwise anodyne map.
\end{proof}

\begin{proposition}
 If $\mathcal{T}$ is a $\pi$-tribe, then so is $\mathcal{T}^{R^{op}}_f$.
\end{proposition}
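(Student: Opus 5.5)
We need to show that $\mathcal{T}^{R^{op}}_f$ has dependent products along $p$-fibrations, and that these products preserve $p$-fibrations and anodyne maps, i.e.\ are stable under pullback along $p$-fibrations. The plan is to transport the $\pi$-tribe structure from $\mathcal{T}^{\mathbf{D}_r^{op}}_f$ through the adjunction $p^* \dashv p_*$. We use that $p_* p^* \simeq id$ on $\mathcal{T}^{R^{op}}$, by absolute density of $p$. We also use the preceding Proposition: $p_*$ sends Reedy fibrations to $p$-fibrations and pointwise anodyne maps to pointwise anodyne maps.

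\textbf{Step 1: $\pi$-tribe structure on Reedy diagrams.} First we recall, or prove by induction on degree, that the Reedy tribe $\mathcal{T}^{\mathbf{D}_r^{op}}_f$ is a $\pi$-tribe when $\mathcal{T}$ is. The inverse category $\mathbf{D}_r^{op}$ has finitely many objects below each degree, since $R$ has finitely many objects and isomorphisms in each degree. Dependent products along a Reedy fibration $q : E \to B$ are then built level by level. At an object $d$, one forms the product in $\mathcal{T}$ along the map from $B_d$ pulled back over the matching object. This is the standard argument used for Reedy $\pi$-tribes (as in \cite{ks2019internal}), and it only uses that $\mathcal{T}$ is a $\pi$-tribe together with stability of fibrations and anodyne maps under pullback and $\Pi$. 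We take this step as essentially known and cite it.

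\textbf{Step 2: defining $\Pi$ on $R$-diagrams.} Let $f : X \to Y$ be a $p$-fibration in $\mathcal{T}^{R^{op}}_f$ and $g : E \to X$ a $p$-fibration. We set
$$\Pi_f(g) := p_*\big(\Pi_{p^* f}(p^* g)\big).$$
By the preceding Proposition this is a $p$-fibration over $p_* p^* Y \simeq Y$. For the universal property, take $Z \to Y$ in $\mathcal{T}^{R^{op}}_f$. The adjunction $p^* \dashv p_*$ gives
$$\hom_Y\big(Z, p_*\Pi_{p^*f}(p^*g)\big) \cong \hom_{p^*Y}\big(p^*Z, \Pi_{p^*f}(p^*g)\big) \cong \hom_{p^*X}\big(p^*Z \times_{p^*Y} p^*X,\ p^*E\big).$$
Since $p^*$ preserves pullbacks, the last set is $\hom_{p^*X}\big(p^*(Z\times_Y X), p^*E\big)$. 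Because $p^*$ is fully faithful, this equals $\hom_X(Z \times_Y X, E)$, which is the required universal property.

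\textbf{Step 3: preservation of anodyne maps (main obstacle).} It remains to show that $\Pi_f$ preserves anodyne maps. By \cref{main_result}, the anodyne maps in $\mathcal{T}^{R^{op}}_f$ are exactly the pointwise anodyne ones. Let $u : E \to E'$ be a map over $X$ between $p$-fibrations that is pointwise anodyne. Then $p^* u$ is pointwise anodyne, hence anodyne in the Reedy tribe. By Step 1, $\Pi_{p^*f}(p^*u)$ is anodyne, hence pointwise anodyne. The preceding Proposition then makes $p_*$ of it pointwise anodyne. The expected subtlety is that the domain of the Proposition is the category of Reedy fibrant diagrams, so we must check that $\Pi_{p^*f}(p^*g)$ is Reedy fibrant. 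It is, being a Reedy fibration over the Reedy fibrant $p^*Y$. We must also check that the identification $p_* p^* \simeq id$ is compatible with the slices, so that $\Pi_f$ is indeed computed over $Y$ and not merely over $p_*p^*Y$. If the literature does not directly provide Step 1 in the form needed, that is where the real work lies: an induction on degree in $\mathbf{D}_r^{op}$, using the finiteness of matching categories, establishing the formula for the matching-object comparison map.
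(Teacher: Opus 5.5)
Your proposal is correct and follows the paper's proof. You define $\Pi_f(g) := p_*\Pi_{p^*f}(p^*g)$, derive the universal property from $p^* \dashv p_*$ together with $p^*$ preserving pullbacks and being fully faithful (the paper phrases this via the evaluation map, but it is the same argument), and obtain preservation of anodyne maps from the Reedy $\pi$-tribe structure on $\mathcal{T}^{\mathbf{D}_r^{op}}_f$ combined with $p_*$ preserving pointwise anodyne maps, which the paper likewise takes as known.
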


\begin{proof}
 Given two $p$-fibrations $f : A \to B$ and $g : B \to C$, we claim that $p_* \Pi_{p^* g}(p^* f)$ defines an internal product of $f$ along $g$. First observe that the resulting map $\Pi_g f \to C$ is indeed a $p$-fibration.
 
 We still need to check that the object so-defined enjoys the correct universal property. Consider a map $D \to C$, and a map $D \times_C B \to A$. Then, $p^*(D \times_C B) \simeq p^*(D) \times_{p^*C}  p^*B \to p^*A$ factors through the evaluation map $\epsilon :  \Pi_{p^* g}(p^* f) \to p^*A$ via $v \times_{p^*C} p^*B$ for a uniquely defined map $v:  p^*D \to  \Pi_{p^* g}(p^* f)$. Taking the image under $p_*$, this yields a factorization of $D \times_C B \to A$ through the evaluation $p_* \epsilon : p_* \Pi_{p^* g}(p^* f) \to A$ via $p_* v : D \to p_* \Pi_{p^* g}(p^* f)$, which is the unique map with this property since $p^*$ is fully faithful.
 
 Finally, it is clear that anodyne maps are mapped to anodyne maps by the internal product functor $\Pi_g$ since the tribe of Reedy fibrant diagrams of shape $\mathbf{D}_r^{op}$ in $\mathcal{T}$ is known to be a $\pi$-tribe when $\mathcal{T}$ is a $\pi$-tribe.
\end{proof}

\section{Examples}

In this section, we consider a group $G$ (seen as one-object category). With $G_0 := *$ the terminal category, and with the unique functor $G_0 \to G$, the condition of the first section is satisfied. 

The category $\mathbf{D}_G$, constructed from $G_0 \to G$, has objects the arrows $g : * \to *$ corresponding to a "free" automorphism $g$ of the object $*$ of $G$, that is to the elements of $G$, except for the identity element that corresponds to the identity arrow $id_* : * \to *$.
It is easy to see that there are no morphisms between two of these arrows corresponding to non-identity elements of $G$, and that for any such element $g \in G$, there are exactly two arrows from the identity, as pictured below.

\[\begin{tikzcd}[ampersand replacement=\&]
	{*} \&\& {*} \&\& {*} \&\& {*} \\
	\\
	{*} \&\& {*} \&\& {*} \&\& {*}
	\arrow["{id_*}", from=1-1, to=3-1]
	\arrow["g"', from=1-3, to=1-1]
	\arrow["g"', from=1-3, to=3-3]
	\arrow["{id_*}"', from=1-5, to=3-5]
	\arrow["{id_*}"', from=1-7, to=1-5]
	\arrow["g", from=1-7, to=3-7]
	\arrow["{id_*}"', from=3-1, to=3-3]
	\arrow["g"', from=3-5, to=3-7]
\end{tikzcd}\]

In particular, when $G$ has only one non-identity element, the category $\mathbf{D}_G^{op}$ is given by two objects and two parallel arrows between them.

Given a tribe $\mathcal{T}$, we have established in \cref{main_result} that $\mathcal{T}^{G^{op}}$ also has a tribe structure. Given an object $X$ in $\mathcal{T}^{G^{op}}$, which is nothing but an object $x$ of $\mathcal{T}$ together with an action of $G(^{op})$ on it, the object $p^* X$ of $\mathcal{T}^{\mathbf{D}_G^{op}}$ is the diagram
\[\begin{tikzcd}[ampersand replacement=\&]
	\&\& x \\
	\\
	x \&\& x \& {...} \\
	\&\& {...}
	\arrow["{g_1}"', shift right=2, from=1-3, to=3-3]
	\arrow["{id_x}", shift left=2, from=1-3, to=3-3]
	\arrow["{g_0}", shift left=2, from=3-1, to=3-3]
	\arrow["{id_x}"', shift right=2, from=3-1, to=3-3]
\end{tikzcd}\]
where $g_i$ are the elements of $G$.

The Reedy fibrancy criterion for $p^* X$ is trivially verified for the unique object of degree $0$ (corresponding to the identity arrow), since the matching object is the terminal object of $\mathcal{T}$. For the objects of the degree $1$, i.e. for all the other objects, the matching objects are binary products, and the matching map is of the form
\[\begin{tikzcd}[ampersand replacement=\&]
	x \&\& {x \times x}
	\arrow["{<id_x,g_i>}"', from=1-1, to=1-3]
\end{tikzcd}\]
and the Reedy fibrancy criterion asserts that this is a fibration.

For instance, if $G$ has only one non-identity element, if $\mathcal{T}$ is the tribe of small categories with the fibrations being the isofibrations, and if $X := (C,id_C)$ is a category equipped with the identity automorphism, the fibrancy criterion boils down to the diagonal functor $$C \to C \times C$$ being an isofibration. This is equivalent to the statement that $C$ admits no non-trivial isomorphisms ($C$ is a \textit{gaunt} category).

In particular, this shows that the underlying fibration category structure differs from the one of \cite{radulescu2006cofibrations} where the fibrations are pointwise.

\newpage

\begingroup
\setlength{\emergencystretch}{.5em}
\RaggedRight
\printbibliography
\endgroup

\end{document}